\numberwithin{equation}{section}
\newtheorem{theorem}[equation]{Theorem}
\newtheorem{lemma}[equation]{Lemma}
\newtheorem{claim}[equation]{Claim}
\newtheorem{corollary}[equation]{Corollary}
\newtheorem{definition}[equation]{Definition}
\renewcommand{\phi}{\varphi}
\newcommand{\D}{\mathrm{d}}
\renewcommand{\(}{\bigl(}
\renewcommand{\)}{\bigr)\vphantom{)}}
\newcommand{\const}{\operatorname{const}}
\newcommand{\Tra}{\operatorname{Tra}}
\newcommand{\Ra}{\operatorname{Ra}}
\newcommand{\Di}{\operatorname{Di}}
\newcommand{\Const}{\operatorname{Const}}
\renewcommand{\div}{\operatorname{div}}
\newcommand{\R}{\mathbb R}
\newcommand{\Z}{\mathbb Z}
\newcommand{\T}{\mathbb T}
\newcommand{\bB}{\mathbb B}
\newcommand{\One}{{1\hskip-2.5pt{\rm l}}}
\def\emailwww#1#2{\par\qquad {\tt #1}\par\qquad {\tt #2}\medskip}
\title{Uniformly spread measures and vector fields}
\author{Mikhail Sodin\footnotemark[1]\ \ and Boris Tsirelson\footnotemark[1]}
\date{}
\begin{document}

\maketitle

\begin{abstract}

We show that two different ideas of uniform spreading of locally
finite measures in the $d$-dimensional Euclidean space are
equivalent. The first idea is formulated in terms of finite distance
transportations to the Lebesgue measure, while the second
idea is formulated in terms of vector fields connecting
a given measure with the Lebesgue measure.

\end{abstract}

{
\renewcommand{\thefootnote}{\fnsymbol{footnote}}
\footnotetext[1]{%
 Supported by the Israel Science Foundation of the Israel Academy of Sciences
 and Humanities}
}

\section{Introduction}

This text aims to disentangle and make explicit some ideas
implicit in our work \cite{SoTsi}. It can be read independently
of \cite{SoTsi}.

\medskip

Given a locally finite non-negative measure $\nu$ on the Euclidean space
$\R^d$, we are interested to know \emph{how evenly is the measure
$\nu$ spread over $\R^d$}? First, we consider counting measures for discrete
subsets $X\subset\R^d$: $\nu_X = \sum_{x\in X} \delta_x$
where $\delta_x$ is a unit measure sitting at $x$. Following
Laczkovich~\cite{La1, La2},
we say that the set $X$ (and the measure $\nu_X$) are
\emph{uniformly spread in $\R^d$} if there exists a bijection
$S\colon \Z^d \to X$ such that $\displaystyle
\sup \{ |S(z)-z|\colon z\in\Z^d \} <\infty$.
Equivalently, there exists a measurable map $T\colon \R^d\to X$ called the
\emph{marriage} between  the $d$-dimensional Lebesgue measure $m_d$ and $\nu_X$  (a.k.a. ``matching'',
``allocation'') that pushes forward the Lebesgue measure $m_d$ to $\nu_X$ and such that
$\sup \{ {|T(x)-x|\colon} {x\in\R^d} \} <\infty$.

To extend the notion of uniform spreading to arbitrary measures on $\R^d$, we
use the idea of the \emph{mass transfer} that goes back to
G.~Monge and L.~V.~Kantorovich~\cite[Chapter~VIII, \S4]{KA}.
Let $\nu_1$ and $\nu_2$ be locally finite positive
measures on $\R^d$. We call a positive locally finite measure
$\gamma$ on $\R^d\times \R^d$ a \emph{transportation} from $ \nu_1
$ to $ \nu_2 $, if
$\gamma$ has marginals $\nu_1$ and $\nu_2$, that is
\[
\iint_{\R^d\times \R^d} \phi(x)\, \D\gamma(x,y) = \int_{\R^d} \phi(x) \,
\D\nu_1(x)\,,
\]
and
\[
\iint_{\R^d\times\R^d} \phi(y)\, \D\gamma(x,y) = \int_{\R^d}
\phi(y) \, \D\nu_2(y)
\]
for all continuous functions $\phi\colon \R^d\to\R^1$ with a
compact support. Note that if there exists a map $\tau\colon
\R^d\to\R^d$ that pushes forward $ \nu_1 $ to $ \nu_2 $, then
the corresponding transportation $\gamma_\tau$ is defined as follows:
\[
\iint_{\R^d\times \R^d} \psi(x, y)\, \D\gamma_\tau(x,y)
= \int_{\R^d} \psi(x, \tau (x))\, \D\nu_1(x)
\]
for an arbitrary continuous function $\psi\colon \R^d\times \R^d\to\R^1$ with
a compact support.

The better $\gamma$ is concentrated near the
diagonal of $\R^d\times \R^d$, the closer the measures $\nu_1$ and
$\nu_2$ must be to each other. We shall measure such a concentration
in the $L^\infty$-norm and set
\[
\Tra(\nu_1, \nu_2) = \inf_\gamma ||x-y||_{L^\infty(\gamma)} =
\inf_\gamma \, \sup \{|x-y|\colon x,y \in
\operatorname{spt}(\gamma)\} \in [0,\infty] \,,
\]
where the infimum is taken over all transportations  $\gamma$, and
`$\operatorname{spt}$' denotes the closed support.
Clearly, $ \Tra(\nu_1,\nu_2) + \Tra(\nu_2,\nu_3) \ge \Tra(\nu_1,\nu_3) $.
By $\Tra (\nu) =
\Tra(\nu, m_d)$ we denote the transportation distance between the
measure $\nu$ and the Lebesgue measure $m_d$. If $\nu=\nu_X$ with
a discrete set $X\in\R^d$, then
\[
\const\cdot \Tra (\nu) \le \inf_S \sup_{x\in\Z^d}
| S(x) - x | \le \Const\cdot \Tra (\nu)
\]
where the infimum is taken over all bijections $S\colon \Z^d\to X$.
This follows, for instance, from the locally finite marriage lemma
discussed two paragraphs below.
Throughout the paper, `Const' and `const' mean positive constants
that depend only on the dimension $d$. The values of these constants
can be changed at each occurrence.

\medskip

There exists a dual definition of the transportation distance $\Tra (\nu_1,
\nu_2)$.
The distance $\Di(\nu_1, \nu_2)$ is defined as the
infimum of $r\in (0, \infty)$ such that
\begin{equation}\label{eq.dis}
\nu_1 (B) \le \nu_2(B_{+r})\,, \qquad \text{and}\quad  \nu_2(B) \le
\nu_1(B_{+r})\,,
\end{equation}
for each bounded Borel set $B\subset \R^d$. Here, $B_{+r}$ is the closed
$r$-neighbourhood of $B$ (actually, for our purposes, we could take open
neighbourhoods as well). The distance $\Di$ ranges from $0$ to
$+\infty$, the both ends are included. We define the {\em discrepancy}
of the measure $\nu$ as $D(\nu)=\Di (\nu, m_d)$. The following duality is classical.

\begin{theorem}\label{thm.equiv}
$\Tra (\nu_1, \nu_2) = \Di(\nu_1, \nu_2)$. In particular,
$\Tra (\nu) = D(\nu)$.
\end{theorem}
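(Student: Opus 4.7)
I would prove the two inequalities separately. The direction $\Di(\nu_1,\nu_2)\le\Tra(\nu_1,\nu_2)$ follows immediately from the definitions: given any transportation $\gamma$ with $\|x-y\|_{L^\infty(\gamma)}\le r$ and any bounded Borel $B\subseteq\R^d$, the set $B\times(\R^d\setminus B_{+r})$ is disjoint from the closed set $\{(x,y):|x-y|\le r\}\supseteq\operatorname{spt}\gamma$, so
\[
\nu_1(B)\;=\;\gamma(B\times\R^d)\;=\;\gamma(B\times B_{+r})\;\le\;\gamma(\R^d\times B_{+r})\;=\;\nu_2(B_{+r}),
\]
and the symmetric bound $\nu_2(B)\le\nu_1(B_{+r})$ is obtained identically.

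For the reverse inequality $\Tra\le\Di$, fix $r>\Di(\nu_1,\nu_2)$; I plan to build a transportation supported in $\{|x-y|\le r\}$ by finite approximation followed by a vague limit. Let $Q_n=[-n,n]^d$. The bilateral discrepancy conditions $\nu_1(A)\le\nu_2(A_{+r})$ and $\nu_2(A)\le\nu_1(A_{+r})$ hold for every closed $A\subseteq Q_n$, so the classical finite Strassen/Kantorovich--Hall theorem produces a positive measure $\gamma_n$ on $Q_{n+r}\times Q_{n+r}$, supported in $\{|x-y|\le r\}$, whose marginals agree with $\nu_1$ and $\nu_2$ on $Q_n$, with any mass imbalance absorbed by the buffer annulus $Q_{n+r}\setminus Q_n$. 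Since $\gamma_n(K\times\R^d)\le\nu_1(K)$ for every compact $K$, the family $\{\gamma_n\}$ is uniformly locally finite, so Banach--Alaoglu together with a diagonal extraction yields a subsequence converging vaguely to a locally finite Radon measure $\gamma$ on $\R^d\times\R^d$.

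Closure of $\{|x-y|\le r\}$ gives $\operatorname{spt}\gamma\subseteq\{|x-y|\le r\}$ automatically. To identify the marginals of $\gamma$, I would test vague convergence against products $\phi(x)\psi(y)$, where $\phi\in C_c(\R^d)$ is the target test function on the first factor and $\psi\in C_c(\R^d)$ equals $1$ on a closed $r$-neighbourhood of $\operatorname{spt}\phi$; the support property of $\gamma_n$ and $\gamma$ reduces these integrals to ones depending only on $\phi$, giving vague convergence of the first marginals. Combined with the finite-level equality of first marginals on any fixed $Q_{n_0}$ (valid for all $n\ge n_0$), this forces the first marginal of $\gamma$ to equal $\nu_1$; the symmetric argument gives the second marginal $\nu_2$.

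The main obstacle is the finite Strassen step. Since $\nu_1|_{Q_n}$ and $\nu_2|_{Q_n}$ typically have unequal total mass, one cannot apply Strassen's theorem to them directly; the device of working on the enlarged cube $Q_{n+r}$ and letting the annular buffer absorb the mismatch, with both directions of the discrepancy hypothesis used simultaneously, is what makes the argument go through. The subsequent vague-limit bookkeeping is routine once this finite step is in place.
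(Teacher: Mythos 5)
Your easy direction and your vague-limit bookkeeping (testing against $\phi(x)\psi(y)$ with $\psi\equiv 1$ on the $r$-neighbourhood of $\operatorname{spt}\phi$, plus the frozen marginal identity on a fixed $Q_{n_0}$) are fine. The gap is exactly at the point you yourself flag and then wave away: the finite step is \emph{not} an application of the classical Strassen theorem, and the ``device'' that is supposed to make the buffer absorb the imbalance is never produced. Strassen's theorem needs two finite measures of equal total mass whose domination condition holds for the given closed set; the restrictions $\nu_1|_{Q_n}$, $\nu_2|_{Q_n}$ (or to $Q_{n+r}$) have unequal masses and, worse, restriction destroys the hypothesis: for $B$ near the boundary, $\nu_1(B)\le\nu_2(B_{+r})$ says nothing about $\nu_2(B_{+r}\cap Q_{n+r})$. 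To make your finite lemma precise one must, for instance, adjoin dummy atoms $a$ and $b$ to the two copies of the window, with $a$ adjacent exactly to the $r$-strip along $\partial Q_n$ on the $\nu_2$-side and $b$ to the strip on the $\nu_1$-side (and $a$ to $b$), choose the dummy masses subject to $\alpha\ge\nu_1(Q_{n+r}\setminus Q_n)$, $\beta\ge\nu_2(Q_{n+r}\setminus Q_n)$ and total-mass balance, and then verify the Hall--Strassen condition for the augmented pair for \emph{all} sets, including those containing a dummy atom. That verification is the real content: e.g.\ for $B=B'\cup\{a\}$ it reduces, after complementation, to the opposite inequality $\nu_2(C)\le\nu_1(C_{+r})$ applied to $C=Q_n\setminus\bigl(B'_{+r}\cup\mathrm{strip}\bigr)$, for which one needs $C_{+r}\subseteq Q_n\setminus B'$. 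So both discrepancy inequalities interact in a non-obvious way precisely here; asserting that the step is ``what makes the argument go through'' and that the rest is routine leaves the heart of the theorem unproved. (An alternative within your scheme --- two one-sided sub-couplings, one with first marginal exact, one with second marginal exact, merged by a Schr\"oder--Bernstein-type exchange for measures --- also requires a genuine proof of the merging step.)

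For comparison, the paper avoids exhaustion entirely: it proves the more general Theorem~\ref{thm.A}, for an arbitrary closed symmetric $F$ with bounded sections, by duality --- the pair $(\nu_1,\nu_2)$ defines a linear functional on the subspace $\{f(x)+g(y)\}\subset C_0(F)$, the two discrepancy inequalities give its positivity via a level-set argument, and M.~Riesz' extension theorem produces a positive functional on $C_0(F)$, i.e.\ the desired transportation supported in $F$. Your compactness route is legitimate and, if completed, would be more constructive in spirit (it is how Laczkovich handles the discrete case via the locally finite marriage lemma), but as written the decisive finite-window lemma is only named, not proved.
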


For finite measures $\nu_1$ and $\nu_2$, it
follows from a result of Strassen~\cite[Theorem~11]{Strassen} and Sudakov~\cite{Sudakov}.
If the measures $\nu_1$ and $\nu_2$ are counting measures of discrete sets $X_1$ and $X_2$,
then it follows from a locally finite version of the marriage lemma due to M.~Hall and R.~Rado,
see Laczkovich~\cite{La2}. Note that the locally finite marriage lemma asserts existence
of a bijection between the sets $X_1$ and $X_2$ which is more than a transportation
from $\nu_1$ to $\nu_2$. Theorem~\ref{thm.equiv} is also mentioned in Gromov~\cite[Section~$3\frac12$]{Gromov},
though the exposition there is quite sketchy. For the reader's convenience,
we recall the proof in Appendix.

\medskip
A different idea of connecting the measures $\nu$ and $m_d$ comes
from the potential theory. We say that a locally integrable vector
field  $v$ {\em connects} the measures $\nu$ and $m_d$ if $\div v =
\nu-m_d$ (in the weak sense), that is
\[
\int_{\R^d} \langle v(x), \nabla \phi (x) \rangle\, \D m_d(x) =
-\int_{\R^d} \phi (x) \, \D(\nu - m_d)(x)
\]
for all smooth compactly supported functions $\phi\colon \R^d\to
\R^1$. It is easy to see that such a field always exists. For
instance, we can take $v=\nabla h$, where $h$ is a solution to the
Poisson equation $\Delta h=\nu-m_d$ in $\R^d$ which exists
due to a subharmonic version of Weierstrass' representation
theorem \cite[Theorem~4.1]{HK}.

Let $\bB (x; r)$ be a ball of radius $r$ centered at $x$, and
$r\bB = \bB (0; r)$. Set $ \chi_r =
\frac1{m_d(r\bB)} \One_{r\bB} $
where $\One_{r\bB}$ is the indicator function of the ball $r\bB$.
We measure the size of the field $v$ as follows.

\begin{definition}
For a locally integrable vector field $v$ on $\R^d$, we set
\[
\Ra (v) = \inf_{r>0} \left\{ r + \|\, v*\chi_r\, \|_\infty \right\}
\quad {\rm and} \quad
\widetilde\Ra (v) = \inf_{r>0} \left\{ r + \|\, |v|*\chi_r\, \|_\infty \right\}\,,
\]
where $*$ denotes the convolution.
\end{definition}
Evidently, $\Ra (v) \le \widetilde\Ra (v) \le \|v\|_{\infty} $.
Note that the multiplicative group $\R_+$ acts by scaling on the measures and vector fields:
$\nu_t (B) = \nu(tB)$, $v_t(x) = t^{-1}v(tx)$. These actions are `coordinated': if $\div v = \nu-m_d$,
then $\div v_t = \nu_t - m_d$, and they are respected by our definitions of $\Tra$, $\Ra$
and $\widetilde\Ra$: $\Tra (\nu_t) = t^{-1}\Tra (\nu)$, $\Ra(v_t) = t^{-1}\Ra(v)$
and $\widetilde\Ra (v_t) = t^{-1}\widetilde\Ra (v)$.

\begin{theorem}\label{thm.main}
Let $\nu$ be a non-negative locally finite measure on $\R^d$.
Then
\[
\const\cdot \inf_v \widetilde\Ra (v) \le \Tra (\nu) \le \Const\cdot \inf_v \Ra
(v) \, ,
\]
where the infimum is taken over all vector fields $v$ connecting
the measures $\nu $ and $m_d$.
\end{theorem}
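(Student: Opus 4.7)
The strategy is to prove the two inequalities separately; on the harder side the duality $\Tra=\Di$ of Theorem~\ref{thm.equiv} is invoked to switch to a discrepancy formulation.

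\emph{Lower bound $\inf_v \widetilde\Ra(v)\le\Const\cdot\Tra(\nu)$.} Fix a near-optimal transportation $\gamma$ from $\nu$ to $m_d$ with $\|x-y\|_{L^\infty(\gamma)}\le T\approx\Tra(\nu)$, and define a vector-valued distribution $v$ by spreading the ``flow'' of $\gamma$ along the straight-line paths joining the marginals: for a scalar test function $\psi$,
\[
\int \langle v,\nabla\psi\rangle\,\D m_d \;:=\; \int_{\R^d\times\R^d}\!\!\int_0^1 \langle y-x,\,\nabla\psi\((1-t)x+ty\)\rangle\,\D t\,\D\gamma(x,y).
\]
The right side telescopes to $\int\(\psi(y)-\psi(x)\)\,\D\gamma=\int\psi\,\D(m_d-\nu)$, so $\div v=\nu-m_d$. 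To bound $|v|*\chi_r(z_0)$, note that only paths $[x,y]$ meeting $\bB(z_0;r)$ contribute to $\int_{\bB(z_0;r)}|v|\,\D m_d$; since $|y-x|\le T$, both endpoints of such a path lie in $\bB(z_0;r+T)$, and each path contributes at most $2r$ (the diameter of the ball). Using the $m_d$-marginal of $\gamma$ to bound the $\gamma$-mass of eligible pairs,
\[
\int_{\bB(z_0;r)}|v|\,\D m_d \;\le\; 2r\cdot m_d(\bB(z_0;r+T)),
\]
so $|v|*\chi_r(z_0)\le 2(1+T/r)^d\cdot r$. Choosing $r=T$ yields $\widetilde\Ra(v)\le\Const\cdot T$.

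\emph{Upper bound $\Tra(\nu)\le\Const\cdot\inf_v\Ra(v)$.} Given $v$ with $\div v=\nu-m_d$, choose $r>0$ with $r+\|v*\chi_r\|_\infty$ close to $\Ra(v)=:R$. Smooth at scale $r$: put $\mu:=\nu*\chi_r$ and $w:=v*\chi_r$. Since $\nu\ge 0$ and $m_d*\chi_r=1$, the function $\mu$ is a non-negative locally bounded density, $\div w=\mu-1$, and $\|w\|_\infty\le R$. The product-type measure $\D\gamma_0(x,y):=\chi_r(x-y)\,\D m_d(x)\,\D\nu(y)$ has marginals $\mu$ and $\nu$ and support in $\{|x-y|\le r\}$, so $\Tra(\nu,\mu)\le r$. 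By the triangle inequality for $\Tra$ it remains to prove $\Tra(\mu,m_d)\le\Const R$; via Theorem~\ref{thm.equiv} this reduces to the discrepancy inequality $\mu(B)\le m_d(B_{+\Const R})$ for every bounded Borel $B$ (together with its symmetric companion). With the Lipschitz cutoff $\eta:=(1-\dist(\cdot,B)/R)_+$,
\[
\int \eta\,\D(\mu-m_d) \;=\; \int \eta\div w\,\D m_d \;=\; -\int \langle w,\nabla\eta\rangle\,\D m_d,
\]
and the right side is bounded in absolute value by $m_d(B_{+R}\setminus B)$. On sets with smooth boundary this combines with the co-area identity $m_d(B_{+R})-m_d(B)=\int_0^R P(B_{+t})\,\D t$ to produce the discrepancy estimate, and rough Borel sets are handled by approximation through level sets of mollified distance functions.

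\emph{Main obstacle.} The genuine difficulty is the last step: extracting the transportation bound $\Tra(\mu,m_d)\le\Const\|w\|_\infty$ from a bounded vector field $w$ satisfying $\div w=\mu-1$ with $\mu\ge 0$ a density. On smooth sets the divergence theorem reduces the problem to an isoperimetric comparison between perimeter and Minkowski content, but the extension to arbitrary bounded Borel sets is subtle; the non-negativity of $\mu$, inherited from $\nu\ge 0$, is essential here, as the analogous statement would fail for signed measures already in dimension one.
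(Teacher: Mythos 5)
Your lower bound is fine and is essentially the paper's argument in concrete form: the paper integrates abstract fields $v_{x,y}$ with $\div v_{x,y}=\delta_x-\delta_y$, supported in $\bB\(\frac{x+y}2;r\)$ and of $L^1$-norm $\le\Const\cdot r$, against a transportation $\gamma$ concentrated on $\{|x-y|\le r\}$; your straight-line segment flow is one admissible choice of $v_{x,y}$, and your counting of eligible pairs via the Lebesgue marginal matches the paper's. One small point to add: the segment flow is a priori a vector-valued measure, while the definition of $\Ra$, $\widetilde\Ra$ requires a locally integrable field; this is harmless (it is absolutely continuous because the second marginal of $\gamma$ is $m_d$, or one can mollify at scale $r$), but it should be said.

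The upper bound, however, has a genuine gap, and it sits exactly at the step you label the ``main obstacle''. Your reductions (smoothing at scale $r$, $\Tra(\nu,\nu*\chi_r)\le r$, dualizing $\Tra=\Di$ to a discrepancy statement) coincide with the paper's, but the decisive implication --- from $\|w\|_\infty\le R$, $\div w=\mu-1$, $\mu\ge0$ to $\mu(B)\le m_d(B_{+\Const R})$ and $m_d(B)\le\mu(B_{+\Const R})$ for \emph{every} bounded Borel $B$ --- is not proved, and the route you sketch does not close even for smooth $B$. Your cutoff gives $\mu(B)\le m_d(B_{+R})+m_d(B_{+R}\setminus B)$, and the additive error $m_d(B_{+R}\setminus B)$ cannot in general be absorbed into a further enlargement: take $B$ to be a ball $\bB(0;\rho)$ with $\rho\gg R$ intersected with a union of parallel slabs of thickness and spacing much smaller than $R$ (this can be smoothed, so regularity of $\partial B$ is not the issue). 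Then $m_d(B_{+R}\setminus B)$ is comparable to $m_d(\bB(0;\rho))$, while $m_d(B_{+\Const R})-m_d(B_{+R})=O(R\rho^{d-1})$, so the inequality you would need, $m_d(B_{+R}\setminus B)\le m_d(B_{+\Const R})-m_d(B_{+R})$, fails badly; in co-area language, the perimeters of $B_{+t}$ for $t>R$ need not dominate those for $t<R$ when $B$ oscillates at scales below $R$. The missing idea is to regularize $B$ at scale comparable to $R$ \emph{before} applying the divergence theorem: replace $B$ by the union $A$ of lattice cubes of edge $\sim R$ meeting $B$, get $|\mu(A)-m_d(A)|\le\Const\, R\, m_{d-1}(\partial A)$ from $\div w=\mu-1$, and then absorb $R\,m_{d-1}(\partial A)$ into the volume of the collar of neighbouring cubes by a counting argument (each boundary face reflects to a collar cube, with multiplicity at most $2d$). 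This is precisely Laczkovich's lemma (Lemma~\ref{lemma.Laczk}), which the paper invokes and reproves in~\ref{subsect_lemmaL}; it also yields the companion inequality $m_d(B)\le\mu(B_{+\Const R})$, which your sketch would face the same obstruction on. Without this lemma or an equivalent regularization-and-absorption argument, your proof of the upper bound is incomplete.
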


This is the main result of this note. In the proof of the upper bound
we use duality and actually prove that $D(\nu) \le \Const\cdot \Ra (v)$.
For this reason, our technique gives no idea how transportations
$\gamma$ may look like in the case when $\Tra (\nu)$ is finite.

\begin{corollary}\label{cor}
Let $u$ be a $C^2$-function on $\R^d$ such that
$\Delta u = \nu-m_d$. Then
\[
\Tra (\nu) \le \Const \sqrt{\|u\|_\infty}\,.
\]
\end{corollary}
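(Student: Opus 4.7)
The plan is to apply Theorem~\ref{thm.main} to the particular vector field $v=\nabla u$, which connects $\nu$ and $m_d$ since $\div v = \Delta u = \nu - m_d$. It then suffices to bound $\Ra(v)$ in terms of $\|u\|_\infty$ and optimize over the scale parameter $r$.

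The key observation is that for $v=\nabla u$, the convolution $v*\chi_r$ is the average of $\nabla u$ over the ball $\bB(x;r)$, and by the divergence theorem (applicable coordinate-wise thanks to $u\in C^2$) this average equals a boundary integral of $u$ itself:
\[
(v*\chi_r)(x) = \frac1{m_d(r\bB)}\int_{\bB(x;r)}\nabla u(y)\,\D m_d(y)
= \frac1{m_d(r\bB)}\int_{\pd \bB(x;r)} u(y)\, n(y)\, \D S(y),
\]
where $n$ is the outer unit normal. Since the volume of $r\bB$ grows like $r^d$ and the surface area of $\pd(r\bB)$ like $r^{d-1}$, this yields
\[
\|v*\chi_r\|_\infty \le \const \cdot \frac{\|u\|_\infty}{r}.
\]

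Plugging this into the definition of $\Ra$ gives $\Ra(v)\le r + \const\cdot\|u\|_\infty/r$ for every $r>0$. Choosing $r=\sqrt{\|u\|_\infty}$ (or, more precisely, the minimizer of the right-hand side) produces
\[
\Ra(v)\le \Const\cdot \sqrt{\|u\|_\infty}.
\]
Combined with the upper bound $\Tra(\nu)\le \Const\cdot \inf_v \Ra(v)$ from Theorem~\ref{thm.main}, this is the stated inequality.

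There is no real obstacle in this argument; the only small point that requires care is the passage from the volume average of $\nabla u$ to the boundary integral of $u$, and this is a routine application of the divergence theorem available under the $C^2$ hypothesis on $u$. (If one wished to relax the regularity assumption, one could instead convolve $u$ first and then differentiate, using standard mollification arguments, but under the stated hypothesis this is not needed.)
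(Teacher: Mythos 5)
Your proof is correct and is essentially the paper's argument: the same field $\nabla u$, the same key estimate $\|(\nabla u)*\chi_r\|_\infty \le \Const\,\|u\|_\infty/r$ (the paper phrases it as $\|u*\nabla\chi_r\|_\infty \le \|u\|_\infty\,\|\nabla\chi_r\|_1$, which is the same boundary-integral bound you derive from the divergence theorem), and the same choice $r=\sqrt{\|u\|_\infty}$. The only cosmetic difference is bookkeeping: you let the convolution built into $\Ra(\nabla u)$ do the smoothing and apply Theorem~\ref{thm.main} directly to $\nabla u$, whereas the paper applies it to $\nabla(u*\chi_r)$, which connects $\nu*\chi_r$ to $m_d$, and then adds $\Tra(\nu,\nu*\chi_r)\le r$; since $(\nabla u)*\chi_r=\nabla(u*\chi_r)$, the two routes coincide.
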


One can juxtapose this corollary with classical discrepancy estimates
due to Erd\H{o}s and Tur\'an and Ganelius. In \cite{Gan} Ganelius
proved that if $\nu$ is a probability measure
on the unit circumference $\mathbb T\subset \mathbb C$,
and $m$ is the normalized Lebesgue measure on $\T$, then
\[
\sup_{I} | \nu(I) - m(I) | \le \Const \sqrt{ \sup_{\mathbb T} U^\nu }\,,
\]
where the supremum is taken over all arcs $I\subset \mathbb T$, and
\[
U^\nu (z) = \int \log|z-\zeta|\, \D\nu (\zeta)
\]
is the logarithmic potential of the measure $\nu$. Since $U^m$ vanishes on
$\mathbb T$,
we can rewrite this as
\[
\sup_{I} | \nu(I) - m(I) | \le \Const \sqrt{ \sup_{\mathbb T} U^{\nu-m} }.
\]
Note the supremum on the right-hand side, not the supremum of the absolute value
as in our result.

\begin{proof}[Proof of Corollary~\textup{\ref{cor}}]
Consider the convolution $u_r=u*\chi_r$. We have
\[
\nabla u_r=u*\nabla \chi_r, \qquad \text{and} \qquad \Delta u_r = \div \nabla
u_r = \nu*\chi_r - m_d.
\]
Noting that $\nabla \chi_r$ is a finite vector measure of total variation
\[
\| \nabla \chi_r \|_1 = \| \nabla \chi_1 \|_1 \cdot r^{-1} = \Const
\cdot r^{-1}\,,
\]
we have
\[
\Ra (\nabla u_r) \le \|\nabla u_r \|_\infty \le \|u\|_\infty \cdot
\|\nabla\chi_r \|_1 = \frac{\Const}{r}\cdot \|u\|_\infty\,,
\]
and
\[
\Tra(\nu) \le \Tra(\nu,\nu*\chi_r) + \Tra(\nu*\chi_r) \le r + \Const\cdot \Ra
(\nabla u_r) \le r + \frac{\Const}{r}\cdot \| u \|_\infty\,.
\]
Choosing $ r=\sqrt{\|u\|_\infty} $\,, we get the result.
\end{proof}

This corollary immediately yields a seemingly more general result
(cf.~\cite[Theorem~4.3]{SoTsi}).

\begin{corollary}\label{cor2}
Let $u$ be a locally integrable function in $\R^d$ such
that $\Delta u = \nu-m_d$ weakly. Then
\begin{equation}\label{eq.gen}
\Tra (\nu) \le \Const\cdot \inf_{r>0} \big\{
r + \sqrt{\|u*\chi_r\|_\infty} \, \big\} \, .
\end{equation}
\end{corollary}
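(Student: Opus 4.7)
My plan is to reduce to Corollary~\ref{cor} by a double mollification. The main subtlety is that Corollary~\ref{cor} requires $C^2$ regularity, whereas $u$ is only locally integrable and even $u*\chi_r$ is in general not $C^2$. For each fixed $r>0$ I will show
\[
\Tra(\nu) \le r + \Const\sqrt{\|u*\chi_r\|_\infty}\,,
\]
and then take the infimum over $r>0$; a single outer constant $\Const$ absorbs both the coefficient in front of $r$ and the constant in front of the square root.

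The starting observation is that if $\psi\ge 0$ is a probability density supported in $s\bB$, then $\Tra(\mu,\mu*\psi)\le s$: the measure $\D\gamma(x,y)=\psi(y-x)\,\D y\,\D\mu(x)$ has marginals $\mu$ and $\mu*\psi$, and $\operatorname{spt}(\gamma)\subset\{|x-y|\le s\}$. Specialising to $\psi=\chi_r$ gives $\Tra(\nu,\nu*\chi_r)\le r$, so by the triangle inequality for $\Tra$ it is enough to bound $\Tra(\nu*\chi_r)$.

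Set $v:=u*\chi_r$; then $\|v\|_\infty<\infty$ by hypothesis and $\Delta v=\nu*\chi_r-m_d$ weakly. To gain $C^2$ regularity I will convolve once more with a fixed smooth non-negative compactly supported function $\eta$ with $\int\eta=1$ and $\operatorname{spt}\eta\subset\bB$, setting $\eta_\eps(x)=\eps^{-d}\eta(x/\eps)$ and $w_\eps:=v*\eta_\eps$. Then $w_\eps\in C^\infty$, $\Delta w_\eps=\nu*\chi_r*\eta_\eps-m_d$, and crucially $\|w_\eps\|_\infty\le\|v\|_\infty$ because convolution with a probability density does not inflate the sup-norm. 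Corollary~\ref{cor} applied to $w_\eps$ therefore yields
\[
\Tra(\nu*\chi_r*\eta_\eps)\le\Const\sqrt{\|w_\eps\|_\infty}\le\Const\sqrt{\|u*\chi_r\|_\infty}.
\]

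To conclude, invoke the starting observation with $\psi=\eta_\eps$ and $s=\eps$ to get $\Tra(\nu*\chi_r,\nu*\chi_r*\eta_\eps)\le\eps$; the triangle inequality then gives
\[
\Tra(\nu*\chi_r)\le\eps+\Const\sqrt{\|u*\chi_r\|_\infty},
\]
and letting $\eps\to 0$ removes the first summand. Combining this with $\Tra(\nu,\nu*\chi_r)\le r$ gives the promised estimate. There is no real obstacle beyond bookkeeping; the key point is that the auxiliary mollification parameter $\eps$ can be sent to $0$ because the sup-norm bound is $\eps$-independent.
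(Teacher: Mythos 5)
Your proof is correct, and it follows the paper's overall strategy: smooth the potential so that Corollary~\ref{cor} applies, and pay for the smoothing via the triangle inequality for $\Tra$, using the fact that convolving a measure with a probability density supported in $s\bB$ moves it by at most $s$. The implementation differs in a genuine way, though. The paper convolves once with $\widetilde\chi_r=\chi_r*\chi_r*\chi_r$, asserts that $u*\widetilde\chi_r$ is $C^2$, and combines $\Tra(\nu,\nu*\widetilde\chi_r)\le 3r$ with $\|u*\widetilde\chi_r\|_\infty\le\|u*\chi_r\|_\infty$. You separate the two tasks: $\chi_r$ alone controls the sup-norm, while an auxiliary smooth mollifier $\eta_\eps$ supplies the regularity; since $\|u*\chi_r*\eta_\eps\|_\infty\le\|u*\chi_r\|_\infty$ uniformly in $\eps$, you can let $\eps\to0$ at no cost. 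Your variant avoids relying on the regularity of the third convolution power of the ball indicator (a point the paper leaves unverified, and which is delicate in dimension one, where that kernel is only $C^1$), at the price of one extra, entirely routine, limiting step. The individual ingredients you use --- the coupling $\D\gamma(x,y)=\psi(y-x)\,\D y\,\D\mu(x)$ giving $\Tra(\mu,\mu*\psi)\le s$, the commutation $\Delta(u*\chi_r*\eta_\eps)=\nu*\chi_r*\eta_\eps-m_d$ (weak, hence classical for the smooth mollification), and the triangle inequality for $\Tra$ --- are all sound; and restricting attention to those $r$ with $\|u*\chi_r\|_\infty<\infty$ is harmless, since otherwise the right-hand side of \eqref{eq.gen} is infinite.
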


\begin{proof}[Proof of Corollary~\textup{\ref{cor2}}]
Denote by $\widetilde\chi_r$ the $3$-rd convolution power of $\chi_r$
and put $u_r = u*\widetilde\chi_r$. Then
$u_r$ is a $C^2$-function and $\Delta u_r = \nu*\widetilde\chi_r - m_d$.
Since the function $\widetilde\chi_r$ is supported by the ball $3r\bB$,
we have $\Tra(\nu) \le 3r + \Tra(\nu*\widetilde\chi_r)$. Corollary~\ref{cor}
applied to the smoothed potential $u_r$ yields $\Tra(\nu*\widetilde\chi_r) \le
\Const \sqrt{\|u_r\|_\infty}$. At last, note that $\| u_r\|_\infty \le
\|u*\chi_r\|_\infty \cdot \|\chi_r*\chi_r\|_1 = \|u*\chi_r\|_\infty$
completing the argument.
\end{proof}

\section{Proof of Theorem~\ref{thm.main}}

\subsection{The lower bound}

Here, we construct a vector field $v$ that connects the measure
$\nu$ with the Lebesgue measure $m_d$ and such that $\widetilde\Ra (v)\le
\Const\cdot \Tra (\nu)$.

Let $r>\Tra (\nu)$. For any $x,y\in\R^d$ such that $|x-y|\le
r$, there exists a vector field $v_{x,y}$ concentrated on the ball
$\bB\(\frac{x+y}2; r\)$ such that $\div v_{x,y} =
\delta_x-\delta_y$ (as usual, $\delta_x$ is a point measure at $x$
of the unit mass), and
\[
\int_{\R^d} |v_{x,y}(\xi)|\, \D m_d(\xi) \le \Const \cdot r.
\]
(In order to see that such a field $v$ exists, first, consider a special
case $r=1$; then the general case follows by rescaling.)

Now, we take
\[
v = \iint_{\R^d\times \R^d} v_{x,y}\, \D\gamma (x,y)
\]
where the transportation $\gamma $ connects the measures $\nu$ and
$m$, and is concentrated on the set $\{(x,y): |x-y|\le r \}$. Then
\[
\div v = \iint_{\R^d\times\R^d} (\delta_x - \delta_y)\,
\D\gamma(x,y) = \nu - m_d\,,
\]
and for every $z\in \R^d$
\begin{multline*}
\int_{\bB (z; r)} |v(\xi)|\, \D m_d(\xi) \le \iint_{\R^d\times \R^d}
\D\gamma(x,y) \int_{\bB (z; r)} |v_{x,y}(\xi)|\, \D m_d(\xi) \\
\le \Const \cdot r \cdot \iint \D\gamma(x,y)\,,
\end{multline*}
where the latter integral is taken over such $(x,y)$ that
$\bB\(\frac{x+y}2; r\)\cap \bB \(z; r\)\ne \emptyset$, which
implies $|y-z|\le \frac52 r$. Thus,
\begin{multline*}
\int_{\bB (z; r)} |v(\xi)|\, \D m_d(\xi) \le \Const \cdot r \cdot
\iint_{\R^d\times\R^d}
\One_{\bB (z; 5r/2)}(y)\, \D\gamma(x,y) \\
= \Const \cdot r \cdot \int_{\bB (z; 5r/2)} \D m_d(y) \le \Const
\cdot r^{d+1}\,,
\end{multline*}
that is, $\widetilde\Ra (v)\le \Const \cdot r$, q.e.d.

\medskip

Note that in the argument given above,
the Lebesgue measure $m_d$ can be replaced
with any measure $\mu$ satisfying $\mu \le \Const m_d$. The other
inequality $\Tra(\nu) \le \Const\cdot \Ra(v)$ does not permit such a
replacement. Indeed, if $\eta_x$ is a normalized volume within the
unit ball centered at $x$, then for $|x-y|\ge 2$ we have
$\Tra(\eta_x, \eta_y)\ge \const |x-y|$, whereas it is easy to
construct a vector field $v$ connecting the measures $\eta_x$ and
$\eta_y$ with $||v||_{\infty}\le \Const$. Just take $v = (\nabla
E)*(\eta_x-\eta_y)$, $E$ being a fundamental solution for the
Laplacian in $\R^d$.

\subsection{The upper bound}

In what follows, by a unit cube we mean $Q=\prod_{i=1}^d [a_i, a_i+1] $,
$a_i\in \Z$, $1\le i \le d$.
The proof of the upper bound relies on the following.

\begin{lemma}[Laczkovich]\label{lemma.Laczk}
Suppose that for any set $U\subset \R^d$ which is a finite union of the unit
cubes, we have
\begin{equation}\label{eq.1}
| \nu (U) - m_d(U) | \le \rho m_{d-1}(\partial U)
\end{equation}
with $\rho\ge 1$. Then $D(\nu) \le \Const \rho$.
\end{lemma}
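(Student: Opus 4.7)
The plan is to fix a length scale $L$ of order $\rho$, approximate each bounded Borel set $B$ by the union $V$ of the aligned $L$-cubes that meet $B$, and then absorb the perimeter term in~(\ref{eq.1}) by passing to a slightly enlarged set $V^+$, obtained from $V$ by adjoining every $L$-cube sharing a face with it. Since each $L$-cube (for positive integer $L$) is a union of $L^d$ unit cubes, both $V$ and $V^+$ are finite unions of unit cubes and~(\ref{eq.1}) applies to both. The calibration will be arranged so that $V^+$ still lies inside an $O(\rho)$-neighborhood of $B$.

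Set $L=\lceil 4d^2\rho\rceil$ (a positive integer since $\rho\ge1$) and $r=(1+\sqrt d)L=\Const\cdot\rho$. Three elementary geometric facts do all the work. First, every cube of $V$ has diameter $\sqrt dL$ and meets $B$, so $V\subset B_{+\sqrt dL}$, and a cube sharing a face with $V$ lies within distance $L$ of $V$, giving $V^+\subset B_{+(1+\sqrt d)L}=B_{+r}$. Second, each face of $\partial V$ is shared with a unique $L$-cube of $V^+\setminus V$, and each such cube absorbs at most $2d$ of these faces, so
\[
m_d(V^+)-m_d(V)\ \ge\ \frac{L}{2d}\,m_{d-1}(\partial V).
\]
Third, every face of $\partial V^+$ lies on a cube of $V^+\setminus V$ (the old boundary faces of $V$ are now internal in $V^+$), and each such cube has at most $2d$ faces in total, so
\[
m_{d-1}(\partial V^+)\ \le\ 2d\,m_{d-1}(\partial V).
\]

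Given these facts, the upper half of~(\ref{eq.dis}) is immediate,
\[
\nu(B)\le\nu(V)\le m_d(V)+\rho\,m_{d-1}(\partial V)\le m_d(V)+\tfrac{L}{2d}\,m_{d-1}(\partial V)\le m_d(V^+)\le m_d(B_{+r}),
\]
using $\rho\le L/(2d)$; and the lower half follows from~(\ref{eq.1}) applied to $V^+$:
\[
\nu(B_{+r})\ge\nu(V^+)\ge m_d(V^+)-\rho\,m_{d-1}(\partial V^+)\ge m_d(V)+\Bigl(\tfrac{L}{2d}-2d\rho\Bigr)m_{d-1}(\partial V)\ge m_d(V)\ge m_d(B),
\]
where the stronger choice $L\ge 4d^2\rho$ makes the corrective coefficient non-negative. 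Together these yield $D(\nu)\le r=\Const\cdot\rho$.

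The one conceptual step is the calibration of $L$: one factor of $2d$ is needed so that the $\rho$-weighted perimeter of $V$ fits inside the volume increment $m_d(V^+)-m_d(V)$, and a second factor of $2d$ compensates for the growth of perimeter in passing from $V$ to $V^+$. The only things left to verify are the two cube-counting inequalities stated as the second and third facts above; everything else is a routine chaining of~(\ref{eq.1}) with these inequalities.
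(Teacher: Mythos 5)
Your proof is correct and takes essentially the same route as the paper's: approximation of the Borel set by grid cubes of side comparable to $\rho$, a one-layer enlargement (the paper uses the tripled cubes $3Q_i$ instead of face-adjacent neighbours), the same reflected-cube counting to absorb the $\rho$-weighted perimeter into the volume of the added layer, and an application of \eqref{eq.1} to both cube unions. One small point: your bound $m_{d-1}(\partial V^+)\le 2d\,m_{d-1}(\partial V)$ also uses the (immediate) observation that the number of cubes of $V^+\setminus V$ is at most the number of boundary faces of $V$, since each such cube is the unique outside neighbour across at least one such face.
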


In \cite{La2}, Laczkovich proved this lemma for the counting measure $\nu_X$
of a discrete set $X\subset\R^d$. For the reader's
convenience, will recall the proof of this lemma in~\ref{subsect_lemmaL}.

\medskip

Now, the upper bound in Theorem~\ref{thm.main} will readily follow from the
divergence theorem. We need to show that $D(\nu) \le \Const \Ra(v)$.
A simple scaling argument shows that it suffices to consider only the case $\Ra (v) = 1$. Then
there exists $r\le 2$ such that $\| v*\chi_r \|_\infty \le 2$. Note that
$\div(v*\chi_r) = \nu*\chi_r + m_d$.

let $U\subset\R^d$ be a finite union of the unit cubes. Then denoting by $n$
the outward unit normal to $U$, we have
\begin{multline*}
| (\nu*\chi_r)(U) - m_d(U) | = \bigg| \int_U \div(v*\chi_r)\, \D m_d \bigg|
\\ =
\bigg| \int_{\partial U} \langle v*\chi_r, n \rangle\, \D m_{d-1} \bigg| \le
\| v*\chi_r \|_\infty\, m_{d-1}(\partial U) \le 2m_{d-1}(\partial U) \,,
\end{multline*}
whence, by Laczkovich's lemma, $D(\nu*\chi_r) \le \Const$, and finally,
$D(\nu) \le r + D(\nu*\chi_r) \le \Const$.

\section*{Appendix}

\numberwithin{equation}{subsection}

\renewcommand{\thesubsection}{A-\arabic{subsection}}

\setcounter{subsection}{0}  

\subsection{Transportation  supported by a given set}
\label{appendix}

Here, we shall prove a somewhat more general result than
Theorem~\ref{thm.equiv}.
Let $F\subset \R^d\times\R^d$ be a closed symmetric set such that
\begin{equation}\label{eq1.0}
F\cap(\R^d\times B) \qquad\text{is bounded whenever $B$ is bounded.}
\end{equation}
For $U\subset \R^d$, set $U_{+F} = \{x\in\R^d\colon \exists y\in U \;\;
(x,y)\in F \}$.
If $C\subset \R^d$ is a compact set, then the set $C_{+F}$ is compact as well.

\begin{definition}\mbox{}

\noindent{\rm ({\bf i})} $\Tra (F)$ is a set of all pairs $(\nu_1,\nu_2)$ of
locally finite positive measures $\nu_1$, $\nu_2$ on $\R^d$ such
that there exists a transportation $\gamma$ with
$\operatorname{spt}(\gamma)\subset F$.

\noindent{\rm ({\bf ii})} $\Di (F)$ is a set of all pairs $(\nu_1,\nu_2)$ of
locally finite positive measures $\nu_1$, $\nu_2$ on $\R^d$ such that
\[
\nu_1(C) \le \nu_2(C_{+F}) \qquad \text{and} \qquad \nu_2(C) \le \nu_1(C_{+F})
\]
for any compact subset $C\subset \R^d$.
\end{definition}

\begin{theorem}\label{thm.A} For any closed symmetric set $F\subset \R^d\times\R^d$
satisfying \eqref{eq1.0}, $\Tra (F) = \Di(F)$.
\end{theorem}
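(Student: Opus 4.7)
The inclusion $\Tra(F)\subset \Di(F)$ is direct. Given a transportation $\gamma$ with $\operatorname{spt}(\gamma)\subset F$ and a compact $C\subset\R^d$, symmetry of $F$ forces $y\in C_{+F}$ for every $(x,y)\in \operatorname{spt}(\gamma)\cap(C\times\R^d)$, so $\nu_1(C)=\gamma(C\times\R^d)=\gamma(C\times C_{+F})\le \nu_2(C_{+F})$, and the reverse inequality follows by interchanging $\nu_1$ and $\nu_2$.

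For $\Di(F)\subset\Tra(F)$ I plan two stages. \textbf{Stage 1 (compact support).} Assume $\nu_1,\nu_2$ are finite and supported in a common compact $K\subset\R^d$. The $\Di(F)$ inequality with $C=K$ gives $\nu_1(\R^d)=\nu_1(K)\le \nu_2(K_{+F})\le \nu_2(\R^d)$, and symmetry forces equality of total masses. In this compact setting the theorem is exactly the classical Strassen--Sudakov result cited in the introduction: on a compact Polish space, the conditions $\nu_1(C)\le \nu_2(C_{+F})$ and $\nu_2(C)\le \nu_1(C_{+F})$ for all closed $C$ are necessary and sufficient for the existence of a coupling supported on $F$. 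I would prove it either by Hahn--Banach separation in the space of signed measures on a compact rectangle containing $K\cup K_{+F}$, or by discretising $\nu_1,\nu_2$ on a fine grid and applying Hall's marriage theorem to the bipartite graph whose edges sit in a small $\eps$-enlargement of $F$, then letting $\eps\to 0$ and the mesh refine, appealing to weak-$*$ compactness to extract a limit.

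\textbf{Stage 2 (exhaustion).} Let $\bB_n$ be an increasing sequence of closed balls covering $\R^d$ and set $K_n=(\bB_n)_{+F}$, which is bounded by \eqref{eq1.0}. The plan is to construct, for each $n$, finite measures $\nu_1^{(n)},\nu_2^{(n)}$ of equal total mass, compactly supported, coinciding with $\nu_1,\nu_2$ on $\bB_n$, and still obeying the $\Di(F)$ condition on a large common compact set; Stage 1 then yields a transportation $\gamma_n$ supported on $F$. For any compact $C\subset\R^d$, once $n$ is large, the marginal bound $\gamma_n(C\times\R^d)\le\nu_1(C_{+F})$ combined with the $F$-support constraint and \eqref{eq1.0} confines $\gamma_n|_{C\times\R^d}$ to a fixed bounded subset of $\R^d\times\R^d$ with uniformly bounded mass. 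A diagonal weak-$*$ extraction produces a limit $\gamma$; closedness of $F$ gives $\operatorname{spt}(\gamma)\subset F$, and since $\nu_i^{(n)}$ agrees with $\nu_i$ on the exhausting sets $\bB_n$, the marginals of $\gamma$ are $\nu_1$ and $\nu_2$.

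The main obstacle is the truncation step in Stage 2: the naive choice $\nu_i^{(n)}=\nu_i|_{\bB_n}$ typically destroys both mass balance and the $\Di$-condition across the boundary layer of $\bB_n$. The cleanest remedy is to enlarge the rectangle to an auxiliary compact set $L_n\supset K_n$ and redistribute the residual masses of $\nu_1,\nu_2$ on $L_n\setminus\bB_n$ so that the $\Di(F)$ inequalities on compact subsets of $L_n$ are restored; condition \eqref{eq1.0} is precisely what provides enough room to carry out such a padding construction. Once this matching is arranged, the remainder is the routine tightness-and-weak-limit machinery outlined above.
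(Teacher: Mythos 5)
Your first inclusion is fine and coincides with the paper's. The real issue is your Stage 2. The paper deliberately avoids any truncation/exhaustion: it works directly with the locally finite measures, viewing $(f,g)\mapsto \int f\,\D\nu_1+\int g\,\D\nu_2$ as a linear functional on the subspace $L=\pi'(C_0(\R^d)\oplus C_0(\R^d))$ of $C_0(F)$, checks that the $\Di(F)$ hypothesis makes this functional positive (via the level-set inclusions $\{g\ge t\}_{+F}\subset\{f\ge t\}$, $\{f\le t\}_{+F}\subset\{g\le t\}$ and a layer-cake integration), and then invokes M.~Riesz' positive extension theorem, since every $\phi\in C_0(F)$ is dominated by some $f(x)+g(y)$; the extended positive functional is the desired $\gamma\in M_+(F)$. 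Your plan instead reduces to the compact case (where citing Strassen--Sudakov is legitimate; the paper itself says the finite case follows from them) and then tries to glue by exhaustion.

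The gap is precisely the padding step you yourself flag as ``the main obstacle'': you assert that finite, compactly supported $\nu_i^{(n)}$ agreeing with $\nu_i$ on $\bB_n$, with equal masses and still satisfying the $\Di(F)$ inequalities, can be produced because \eqref{eq1.0} ``provides enough room'', but no construction is given, and for a general closed symmetric $F$ (which need not contain the diagonal or be a neighbourhood of it) this is not a routine fix. Cutting at $\bB_n$ creates a boundary defect that propagates through $F$: the excess of $\nu_1$ inside $\bB_n$ can only be met by $\nu_2$-mass in $(\bB_n)_{+F}$, whose own $\Di$-constraints in turn involve $\nu_1$-mass in $((\bB_n)_{+F})_{+F}$, and so on; condition \eqref{eq1.0} keeps each step bounded but does not terminate the regress, so wherever you cut, the two-sided $\Di(F)$ condition for the padded pair will in general fail on sets straddling the cut. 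Since the whole content of Theorem~\ref{thm.A} beyond the cited finite-measure results is exactly the passage to locally finite measures, leaving this step as an assertion leaves the theorem unproved. To salvage your route you would need either an explicit padding construction, or a different mechanism, e.g.\ a one-sided (supply--demand) Strassen theorem giving couplings with first marginal exactly $\nu_1|_{\bB_n}$ and second marginal $\le\nu_2$, applied in both directions and combined by a Schr\"oder--Bernstein-type exchange, followed by your (correct) vague-compactness and marginal-identification argument, which does use \eqref{eq1.0} and the closedness of $F$ appropriately. As written, the proposal is a plan with the decisive step missing.
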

See also Kellerer \cite[Corollary 2.18 and Proposition 3.3]{Ke84} for a wide
class of non-closed sets $ F $.

\medskip Theorem~\ref{thm.equiv} follows immediately from
Theorem~\ref{thm.A}: just take a closed symmetric set
$F_r=\{(x,y)\in \R^d\times\R^d\colon |x-y|\le r\}$. Then
\[
(\nu_1,\nu_2)\in \Tra(F_r) \iff \Tra (\nu_1, \nu_2)\le r
\]
and
\[
(\nu_1,\nu_2)\in \Di(F_r) \iff \Di (\nu_1, \nu_2) \le r\,.
\]

\begin{proof}[Proof of Theorem~\textup{\ref{thm.A}}]
The inclusion
$\Tra(F)\subset \Di(F)$ is rather obvious:
\[
\nu_1 (C) = \gamma(C\times \R^d) = \gamma(C\times C_{+F}) \le
\gamma(\R^d \times C_{+F}) = \nu_2(C_{+F}),
\]
and the same for the other inequality.

The proof of the opposite inclusion $\Di(F)\subset\Tra(F)$ is
based on duality. Consider a linear space $C_0(\R^d)$
of continuous functions with compact support in $\R^d$ endowed
with standard convergence: $f_n\to f$ in $C_0(\R^d)$ if there is a
ball $B$ such that $\operatorname{spt}(f_n)\subset B$ for all $n$,
and the sequence $f_n$ converges uniformly to $f$. The dual space
of continuous linear functionals $M(\R^d)$ consists of signed
measures of locally finite variation on $\R^d$ with a usual
pairing $ \nu(f) = \int f\, \D\nu$.
If a linear functional $\nu$ on
$C_0(\R^d)$ is positive (e.g. $\nu(f)\ge 0$ whenever the function
$f$ is non-negative), then it is continuous and is represented by
a non-negative locally finite measure. The same facts are true for the
linear space $C_0(F)$ of continuous functions with a compact
support in $F$, and its dual space $M(F)$.

Consider a mapping $\pi\colon M(F)\to M(\R^d)\oplus M(\R^d)$
acting as $\pi \gamma = (\nu_1, \nu_2)$, where $\nu_1$ and $\nu_2$
are the marginals of the measure $\gamma$. The mapping $\pi $ is
well-defined due to our assumption (\ref{eq1.0}). The conjugate
mapping $\pi'\colon C_0(\R^d)\oplus C_0(\R^d) \to C(F)$ is
$\pi'(f,g)(x,y) = f(x)+g(y)$ for $(x,y)\in F$.
Assume, that $(\nu_1, \nu_2)\in\Di(F)$. We need to show that the
pair $(\nu_1, \nu_2)$ belongs to the image of the cone of positive
measures $M_+(F)$ under $\pi$; in other words, that there exists
$\gamma\in M_+(F)$ such that
\begin{equation}\label{eq1.a}
\gamma (\pi'(f,g)) = (\nu_1, \nu_2)(f,g) = \int f\, \D\nu_1 + \int
g\, \D\nu_2.
\end{equation}
We shall check below that condition $(\nu_1, \nu_2)\in\Di(F)$ ensures
that the RHS of (\ref{eq1.a}) defines a positive linear functional on a linear
subspace $L = \pi'(C_0(\R^d)\times C_0(\R^d))$ of $C_0(F)$.
The linear space $C_0(F)$ is
\emph{subordinated} to its linear subspace  $L$; i.e. for any $\phi
\in C_0(F)$ there are functions $f,g$ in $C_0(\R^d)$ such that
\[
|\phi (x,y)| \le f(x)+g(y)\,, \qquad (x,y)\in F\,.
\]
Then by the classical M.~Riesz' extension theorem (see e.g.
\cite[Chapter~II, \S6, Theorem~3]{Gelfand4}) we can extend this
linear functional to a positive linear functional on the whole
space $C_0(F)$.

It remains to check that the linear functional is well-defined and
positive. Assume that it does not hold; i.e. there is a pair of functions $f,
g\in C_0(\R^d)$ such that
\[
f(x)+g(y) \ge 0, \qquad (x,y)\in F\,,
\]
however,
\[
\int f\, d\nu_1 + \int g\, d\nu_2 < 0\,.
\]
Replacing $g$ by $-g$, we get a pair of functions such that
\begin{equation}\label{eq1.b}
f(x) \ge g(y), \qquad (x,y)\in F\,,
\end{equation}
and
\begin{equation}\label{eq1.c}
\int f\, \D \nu_1 < \int g\, \D \nu_2\,.
\end{equation}
Then, by virtue of (\ref{eq1.b}),
\begin{align*}
\{y\colon g(y)\ge t\}_{+F} &\subset \{x\colon f(x)\ge t\}\,, \\
\{x\colon f(x)\le t\}_{+F} &\subset \{y\colon g(y)\le t\}\,.
\end{align*}
Using, at last, condition $(\nu_1, \nu_2)\in\Di (F)$, we get
\begin{align*}
\nu_2 \( \{y\colon g(y)\ge t\}\) &\le \nu_1 \(\{x\colon f(x)\ge t
 \}\)\,, \qquad t>0, \\
\nu_2 \( \{y\colon g(y)\le t\}\) &\ge \nu_1 \(\{x\colon f(x)\le t
 \}\)\,, \qquad t<0.
\end{align*}
Then
\[
\int g\,\D\nu_2 \le \int f\,\D\nu_1
\]
which contradicts \eqref{eq1.c} and completes the proof of the
theorem.
\end{proof}

\subsection{Proof of lemma of Laczkovich}
\label{subsect_lemmaL}

We check that, for any bounded Borel set $V\subset\R^d$,
\begin{align}
\nu (V) &\le m_d(V_{+C\rho}) \,, \label{eq.2a} \\
m_d (V) &\le \nu(V_{+C\rho})\,. \label{eq.2b}
\end{align}

Take $M=\left[ 2\rho d \right]+1$ and denote by $\mathcal Q_M$ the collection
of all cubes of edge length $M$,
\[
Q = \prod_{i=1}^d [ a_i M, (a_i+1)M ]
\]
with $a_i\in\Z$, $1\le i \le d$. Given a bounded Borel set $V$,
consider the cubes $Q_1$, ..., $Q_n$ from $\mathcal Q_M$
that intersect the set $V$, and denote by $Q_i' = 3Q_i$ the cube concentric
with $Q_i$ of thrice bigger size, $1\le i \le n$. Set
\[
A = \bigcup_{i=1}^n Q_i, \qquad B = \bigcup_{i=1}^n Q_i'\,.
\]
We'll need a simple geometric claim.

\begin{claim}\label{claim}
\[
m_{d-1}(\partial A) \le \frac{2d}{M}\, m_d(B\setminus A)\,, \qquad
m_{d-1}(\partial B) \le \frac{2d}{M}\, m_d(B\setminus A)\,.
\]
\end{claim}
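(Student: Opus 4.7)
The plan is to reduce both inequalities to a simple cube-counting identity on the grid $\mathcal{Q}_M$. The key geometric observation is that, with $Q_i = \prod_i [a_i M,(a_i+1)M]$, the triple $Q_i' = 3Q_i$ is precisely the union of the $3^d$ cubes of $\mathcal{Q}_M$ that share a vertex with $Q_i$ (in particular, all $2d$ face-neighbors of $Q_i$ lie inside $Q_i' \subset B$). Consequently both $A$ and $B$ are exactly unions of cubes from $\mathcal{Q}_M$, and up to a Lebesgue null set so is $B\setminus A$. Each $(d{-}1)$-face of $\partial A$ or $\partial B$ sits between two adjacent grid cubes and has $m_{d-1}$-measure $M^{d-1}$.

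For the first inequality I count faces of $\partial A$. Every such face is the shared face between some $Q_i \in A$ and some grid cube $R \notin A$, and by the observation above $R \subset Q_i' \subset B$, hence $R \subset B\setminus A$. Let $\mathcal{R}$ be the (finite) set of grid cubes arising as such $R$. Since each cube in $\mathcal{R}$ has only $2d$ face-neighbors in $\mathcal{Q}_M$, it is the ``exterior cube'' of at most $2d$ faces of $\partial A$. So the total number $N$ of faces satisfies $N \le 2d\,|\mathcal{R}|$, while $|\mathcal{R}|\,M^d \le m_d(B\setminus A)$ because the cubes of $\mathcal{R}$ are disjoint and sit inside $B\setminus A$. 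Multiplying by $M^{d-1}$ yields
\[
m_{d-1}(\partial A) = N M^{d-1} \le 2d\,|\mathcal{R}|\,M^{d-1} \le \tfrac{2d}{M}\,m_d(B\setminus A).
\]

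For the second inequality I argue similarly on $\partial B$. A face of $\partial B$ belongs to a unique grid cube $R \subset B$ whose opposite face-neighbor lies outside $B$. If $R$ were equal to some $Q_i \in A$, then all $2d$ of its face-neighbors would be contained in $Q_i' \subset B$, contradicting the fact that $R$ carries a boundary face. Hence $R \in B\setminus A$. Each such $R$ owns at most $2d$ faces of $\partial B$, and the cubes of $B\setminus A$ are disjoint with total volume $m_d(B\setminus A)$, giving the same estimate $m_{d-1}(\partial B) \le \tfrac{2d}{M}\,m_d(B\setminus A)$.

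There is no real obstacle here; the only point that needs care is the ``hidden'' role of $M$: the proof does not use the specific value $M=\lfloor 2\rho d\rfloor+1$, only that $3Q_i$ is tiled by grid cubes of $\mathcal{Q}_M$, which is automatic from the construction. The actual size $M\ge 2\rho d+1$ is what makes the bounds useful later in Lemma~\ref{lemma.Laczk}, not in the claim itself.
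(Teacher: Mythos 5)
Your proof is correct and follows essentially the same route as the paper's: both inequalities are obtained by charging each $(d-1)$-face of the boundary to a neighbouring grid cube lying in $B\setminus A$ (your ``exterior cube'' is exactly the paper's reflected cube $P_j$), with multiplicity at most $2d$ per cube. Your treatment of $\partial B$ even makes explicit a point the paper leaves implicit, namely why the cube carrying a boundary face of $B$ cannot be one of the $Q_i$.
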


\begin{proof}[Proof of Claim~\textup{\ref{claim}}]
First, we consider the boundary of the set $A$: $\partial A = \bigcup_{j=1}^r
F_j$ where $F_j$ is a face of some cube $Q_{i_j}$. By $P_j$ we denote the cube
obtained by reflection of $Q_{i_j}$ in $F_j$; clearly, for all $j$,
$P_j\subset B\setminus A$. Each cube can be listed at most $2d$ times in the
list of cubes $P_1$, ..., $P_r$ (since every $P_j$ cannot have more than  $2d$
neighbours among the cubes $Q_1$, ..., $Q_n$). Thus,
\[
2d m_d (B\setminus A) \ge \sum_{j=1}^r m_d(P_j) = rM^d = M \cdot r M^{d-1} = M m_{d-1}(\partial A)\,.
\]
This gives us the first inequality. To estimate $m_{d-1}(\partial B)$, we note
that $B\setminus A = \bigcup_{j=1}^s R_j$ where $R_1$, ..., $R_s$ are different
cubes from the collection $\mathcal Q_M$, and that $\partial B \subset
\bigcup_{j=1}^s \partial R_j$.
Whence,
\[
m_{d-1}(\partial B) \le \sum_{j=1}^s m_{d-1}(\partial R_j) \le s \cdot 2d
M^{d-1} = \frac{2d}{M}\, sM^d = \frac{2d}{M}\, m_d(B\setminus A)
\]
proving the claim.
\end{proof}

Now, we readily finish the proof of the lemma. We choose a constant $C$ (depending on the dimension
$d$) so big that $B \subset V_{+C\rho}$. Then
\begin{multline*}
\nu (V) \le \nu (A) \le m_d(A) + \rho m_{d-1}(\partial A) \\
\le m_d(A) + \frac{2d\rho}{M} m_d(B\setminus A) \le m_d(B) \le
 m_d(V_{+C\rho}) \,,
\end{multline*}
whence~\eqref{eq.2a}; and
\begin{multline*}
\nu (V_{+C\rho}) \ge \nu (B) \ge m_d(B)-\rho m_{d-1}(\partial B)
 \ge m_d(B) - \frac{2d\rho}{M}\, m_d(B\setminus A) \\
\ge m_d(B)-m_d(B\setminus A) = m_d(A) \ge m_d(V) \,,
\end{multline*}
whence~\eqref{eq.2b}.

\bigskip
\filbreak { \small
\begin{sc}
\parindent=0pt\baselineskip=12pt
\parbox{2.3in}{
Mikhail Sodin\\
School of Mathematics\\
Tel Aviv University\\
Tel Aviv 69978, Israel
\smallskip
\emailwww{sodin@tau.ac.il} {} } \hfill
\parbox{2.3in}{
Boris Tsirelson\\
School of Mathematics\\
Tel Aviv University\\
Tel Aviv 69978, Israel
\smallskip
\emailwww{tsirel@tau.ac.il} {www.tau.ac.il/\textasciitilde
tsirel/} }
\end{sc}
}


\begin{thebibliography}{}

\bibitem{Gan} {\sc T.~Ganelius},
\emph{Some applications of a lemma on Fourier series.}
Acad. Serbe Sci. Publ. Inst. Math. {\bf 11} (1957), 9--18.

\bibitem{Gelfand4} {\sc I. M. Gelfand and N. Ya. Vilenkin},
\emph{Generalized functions. Vol. 4. Applications of harmonic analysis.}
Gosudarstv. Izdat. Fiz.-Mat. Lit., Moscow, 1961. (In Russian).
English transl. Academic Press, 1964.

\bibitem{Gromov} {\sc M. Gromov},
\emph{Metric Structures for Riemannian and Non-Riemannian Spaces.}
Birkh\"auser, Boston, 1999.

\bibitem{HK} {\sc W. Hayman and P. Kennedy},
\emph{Subharmonic functions. Vol. I.}
Academic Press, London, 1976.

\bibitem{KA} {\sc L. V. Kantorovich and G. P. Akilov},
\emph{Functional analysis in normed spaces (3rd edition).} Nauka,
Moscow, 1984. (in Russian) Engl. transl. of the 2nd edition:
Pergamon Press, Oxford-Elmsford, N.Y., 1982.

\bibitem{Ke84} {\sc H. G. Kellerer},
\emph{Duality theorems for marginal problems,}
Z. Wahrscheinlichkeitstheorie verw. Gebiete \textbf{67}:4 (1984), 399--432.

\bibitem{La1} {\sc M. Laczkovich},
\emph{Equidecomposability and discrepancy; a solution of Tarski's
circle-squaring problem.}
J. Reine Angew. Math. {\bf 404} (1990), 77--117.

\bibitem{La2} {\sc M. Laczkovich},
\emph{Uniformly spread discrete sets in $R\sp d$.}
J. London Math. Soc. (2) {\bf 46} (1992), 39--57.

\bibitem{SoTsi} {\sc M. Sodin and B. Tsirelson},
\emph{Random complex zeroes, II. Perturbed lattice.}
Israel J. Math. {\bf 152} (2006), 105--124.

\bibitem{Strassen} {\sc V. Strassen},
\emph{The existence of probability measures with given marginals.}
Ann. Math. Statist. {\bf 36} (1965), 423--439.

\bibitem{Sudakov} {\sc V.~N.~Sudakov},
\emph{The existence of probability measure with specified projections.}
Mat. Zametki {\bf 14} (1973), 573--576. (Russian). English
transl.:  Math. Notes {\bf 14} (1973), 886--888.

\end{thebibliography}
\end{document}